\def\ep{\varepsilon}
\newtheorem{theorem}{Theorem}[section]
\newtheorem{definition}[theorem]{Definition}
\newtheorem{lemma}[theorem]{Lemma}
\newtheorem{corollary}[theorem]{Corollary}
\begin{document}

\title{\LARGE{On metric characterizations of some classes of Banach spaces}}

\author{Mikhail I.~Ostrovskii}

\maketitle

\noindent{\bf Abstract.} The paper contains the following results
and observations: (1) There exists a sequence of unweighted graphs
$\{G_n\}_n$ with maximum degree $3$ such that a Banach space $X$
has no nontrivial cotype iff $\{G_n\}_n$ admit uniformly
bilipschitz embeddings into $X$; (2) The same for Banach spaces
with no nontrivial type; (3) A sequence $\{G_n\}$ characterizing
Banach spaces with no nontrivial cotype in the sense described
above can be chosen to be a sequence of bounded degree expanders;
(4) The infinite diamond does not admit a bilipschitz embedding
into Banach spaces with the Radon-Nikod\'{y}m property; (5) A new
proof of the Cheeger-Kleiner result: The Laakso space does not
admit a bilipschitz embedding into Banach spaces with the
Radon-Nikod\'{y}m property; (6) A new proof of the
Johnson-Schechtman result: uniform bilipschitz embeddability of
finite diamonds into a Banach space implies its
nonsuperreflexivity.
\medskip

\noindent{\bf Keywords.} Banach space, diamond graphs, expander
graphs, Laakso graphs, Lipschitz embedding, Radon-Nikod\'{y}m
property
\medskip

\noindent{\bf 2010 Mathematics Subject Classification:} Primary:
46B85; Secondary: 05C12, 46B07, 46B22, 54E35
\medskip

\begin{large}

\section{Introduction}

A mapping $F:X\to Y$ between metric spaces $X$ and $Y$ is called a
{\it $C$-bilipschitz embedding} if there exists $r>0$ such that
$\forall u,v\in X\quad rd_X(u,v)\le d_Y(F(u),F(v))\le rCd_X(u,v)$.
A sequence $\{f_n\}$ of mappings $f_n:U_n\to Z_n$ between metric
spaces is a sequence of {\it uniformly bilipschitz embeddings} if
there is $C<\infty$ such that all of the embeddings are
$C$-bilipschitz.

Many important classes of Banach spaces have been characterized in
terms of uniformly bilipschitz embeddings of finite metric spaces.
Bourgain \cite{Bou86} proved that a Banach space $X$ is
nonsuperreflexive if and only if there exist uniformly bilipschitz
embeddings of finite binary trees $\{T_n\}_{n=1}^\infty$ of all
depths into $X$. Similar characterization of spaces with no type
$>1$ was obtained by Bourgain, Milman, and Wolfson \cite{BMW86}: a
Banach space $X$ has no type $>1$ if and only if there exist
uniformly bilipschitz embeddings of Hamming cubes
$\{H_n\}_{n=1}^\infty$ into $X$ (see \cite{Pis86} for a simpler
proof). Johnson and Schechtman \cite{JS09} found a similar
characterization of nonsuperreflexive spaces in terms of diamond
graphs \cite{GNRS04} and Laakso graphs \cite{Laa00}. Banach spaces
without cotype were characterized by Mendel and Naor \cite{MN08}
in terms of lattice graphs $L_{m,n}$ whose vertex sets are
$\{0,1,\dots,m\}^n$, two vertices are joined by an edge if and
only if their $\ell_\infty$-distance is equal to $1$.\medskip

Characterizations in terms of bilipschitz embeddability of certain
metric spaces are called {\it metric characterizations}. Observe
that binary trees and Laakso graphs are graphs with uniformly
bounded degrees. Degrees of Hamming cubes and the lattice graphs
are unbounded. During the seminar ``Nonlinear geometry of Banach
spaces'' (Workshop in Analysis and Probability at Texas A~\&~M
University, 2009) Johnson posed the following problem: Find metric
characterizations of spaces with no type $p>1$ and with no cotype
in terms of graphs with uniformly bounded degrees. The first part
of this paper is devoted to a solution of this problem.\medskip

In the second part of the paper we prove results related to
another problem posed by Johnson during the mentioned seminar:
Find metric characterizations of reflexivity and the
Radon-Nikod\'{y}m property (RNP). We prove that Banach spaces
containing bilipschitz images of the infinite diamond do not have
the RNP, but the converse is not true. We find a new proof of the
Cheeger-Kleiner \cite{CK09} result that Banach spaces containing
bilipschitz images of the Laakso space do not have the RNP.
\medskip

The author would like to thank Florent Baudier, William
B.~Johnson, Mikhail M.~Po\-pov, Beata Randrianantoanina, and
Gideon Schechtman for their interest to this work and for useful
related discussions.

\section{Type and cotype in terms of graphs with uniformly bounded
degrees}

\begin{theorem}\label{T:deg3} There exist metric characterizations of the
classes of spaces with no type $>1$ and with no cotype in terms of
graphs with maximum degree $3$.
\end{theorem}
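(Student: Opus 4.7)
The plan is to convert the known characterizing sequences---Hamming cubes $H_n$ of Bourgain--Milman--Wolfson and lattice graphs $L_{m,n}$ of Mendel--Naor---into sequences of maximum degree $3$ whose uniform bilipschitz embeddability into $X$ is equivalent to that of the originals. The same recipe produces both sequences: replace every vertex of degree $d$ by a binary tree with $d$ leaves (distributing the incident edges to distinct leaves), and subdivide every edge into a path whose length matches the tree depth, so that ``crossing an original edge'' and ``moving across a tree'' contribute comparable costs.

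For the type case, I would define $G_n$ from $H_n$ as follows. At each vertex $v$ attach a complete binary tree $T_v$ of depth $\lceil\log_2 n\rceil$, and place the $n$ neighbors of $v$ in bijection with $n$ of its leaves; replace each edge $\{u,v\}$ of $H_n$ by a path of length $2\lceil\log_2 n\rceil$ joining the leaf of $T_u$ labeled $v$ with the leaf of $T_v$ labeled $u$. Every vertex of $G_n$ then has degree at most $3$. The cotype sequence is obtained by the same recipe applied to $L_{m,n}$.

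The first direction---uniform embeddability of $G_n$ forces no type (respectively no cotype)---is handled by restriction. Given uniformly $C$-bilipschitz embeddings of $G_n$ into $X$, restrict each to the set of tree roots $\{r_v\}_{v\in V(H_n)}$. A direct shortest-path analysis in $G_n$ gives
\[
2\,d_{H_n}(u,v)\lceil\log_2 n\rceil \;\le\; d_{G_n}(r_u,r_v) \;\le\; 4\,d_{H_n}(u,v)\lceil\log_2 n\rceil,
\]
so the restricted metric is bilipschitz equivalent to $\lceil\log_2 n\rceil\cdot d_{H_n}$ with distortion at most $2$, independent of $n$. Since bilipschitz equivalence ignores positive scalars, the restricted maps are uniformly bilipschitz embeddings of $H_n$ into $X$, and Bourgain--Milman--Wolfson yields the conclusion. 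The cotype direction is parallel, invoking Mendel--Naor instead.

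The converse direction is the main obstacle. Assume $X$ has no type $>1$; by Maurey--Pisier, $\ell_1^k$ is $(1+\ep)$-finitely representable in $X$ for every $k$ and every $\ep>0$, so it suffices to embed $G_n$ into $\ell_1$ with distortion bounded independently of $n$. My plan is to stitch together the standard isometric $\ell_1$-embeddings of the constituent pieces: $H_n$ into $\ell_1^n$ via $v\mapsto v$, each tree $T_v$ isometrically on a private block of coordinates (trees are always $\ell_1$-isometric), and each edge-path trivially. The subtlety is that subdividing the $4$-cycles of $H_n$ creates long cycles in $G_n$ that are not isometrically $\ell_1$-embeddable---though their $\ell_1$-distortion is known to be uniformly bounded---so a careful shortest-path argument is required to certify that the assembled map has distortion bounded by an absolute constant. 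By contrast, the cotype analogue of this step is essentially trivial, because every finite metric space embeds isometrically into $\ell_\infty^N$ while Maurey--Pisier provides $(1+\ep)$-finite representability of $\ell_\infty^k$ in any space without cotype.
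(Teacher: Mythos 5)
Your construction and overall logic coincide with the paper's: the same vertex-to-tree, edge-to-path gadget (the paper's Lemma~\ref{L:GM}, which takes the path length $\ell$ much longer than the tree depth so that the induced copy of the original graph is $(1+\ep)$-isometric, though your factor-$2$ version is enough for a uniform bilipschitz statement); the same restriction-to-roots argument combined with Bourgain--Milman--Wolfson and Mendel--Naor for one implication; and the same Maurey--Pisier reductions (to $\ell_\infty^k$ for cotype, to $\ell_1^k$ for type) for the other. In particular the cotype half is complete as you state it.

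In the type case, however, your proposal stops exactly where the real work begins. The sentence ``a careful shortest-path argument is required to certify that the assembled map has distortion bounded by an absolute constant'' is the entire content of the paper's proof of the type part. One must take two \emph{arbitrary} vertices $x,y$ of the modified cube $S_n$ (possibly interior points of edge-paths or of trees), note that $d_{S_n}(x,y)\ge\|x-y\|_1$ is automatic because every edge is realized as a unit segment, and then explicitly construct an $xy$-path of length $O(\|x-y\|_1)$: first walk from $x$ and from $y$ to the nearest tree-blocks $T_r+\ell w_x$ and $T_r+\ell w_y$, then follow a shortest $w_xw_y$-path in $H_n$ through $k$ edge-paths interleaved with $k+1$ tree fragments, giving length at most $2\ell+k\ell+2(k+1)r\le 4k\ell\le 4\|x-y\|_1$ once $\ell\ge 4r$ (with your choice $\ell=2r$ the constant degrades slightly but the argument survives). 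Until this estimate is carried out, the uniform distortion bound is asserted, not proved. A second, smaller defect: you place each tree $T_v$ on a \emph{private} block of coordinates, but then the two leaves that an edge-path must join differ in their tree coordinates as well as in the $\ell_1^n$ coordinate, so the connecting segment is neither axis-parallel nor a union of unit edges, and the lower bound $d_{S_n}\ge\|\cdot\|_1$ no longer comes for free. The paper avoids this by using a single shared block $\ell_1^m$ for all trees and choosing, for the edge between $v$ and $v+e_t$, leaves whose $\ell_1^m$-components coincide, so that each edge-path is a genuine straight segment of length $\ell$ parallel to $e_t$. You would need this (or an equivalent fix) before the shortest-path analysis can be run.
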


We start with the characterization of cotype. This case is easier,
because the well-known results (\cite[Proposition 15.6.1]{Mat02}
and \cite{MP76}) imply that any family of finite metric spaces is
uniformly bilipschitz embeddable into any Banach space with no
cotype. Therefore to prove the cotype part of the theorem we need
to show only that the graphs $L_{m,n}$ are uniformly bilipschitz
embeddable into a family of graphs with uniformly bounded degrees.
Thus it suffices to prove the following lemma.

\begin{lemma}\label{L:GM} Let $(V(G),d_G)$ be the vertex set of a graph $G$ with its
graph distance, and let $\ep>0$. Then there exist a graph $M$ with
maximum degree $\le 3$, $\ell\in\mathbb{N}$, and an embedding
$F:V(G)\to V(M)$ such that
\[\ell d_G(u,v)\le d_M(F(u),F(v))\le (1+\ep)\ell d_G(u,v)\quad\forall u,v\in V(G),\]
where $d_M$ is the graph distance of $M$.
\end{lemma}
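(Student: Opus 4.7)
The plan is to build $M$ by replacing each vertex $v$ of $G$ by a small binary-tree gadget and each edge by a subdivided path of length $\ell$, then choose $\ell$ large in terms of $\ep$ and the maximum degree $\Delta$ of $G$. More precisely, set $h := \lceil \log_2 \Delta \rceil$. For every $v \in V(G)$ let $T_v$ be a rooted binary tree of depth at most $h$ with at least $\deg_G(v)$ leaves, and fix an injection of the edges of $G$ incident to $v$ into the leaves of $T_v$; declare $F(v)$ to be the root of $T_v$. For each edge $\{u,w\} \in E(G)$ adjoin a fresh path of length $\ell$ identifying its two endpoints with the earmarked leaves of $T_u$ and $T_w$. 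Internal tree vertices have degree $3$, roots have degree $\le 2$, each leaf acquires degree $2$ after gluing, and path interiors have degree $2$, so $\Delta(M) \le 3$.

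For the upper bound I would lift a $G$-geodesic $u = v_0, v_1, \dots, v_k = w$ to an $M$-walk that, at each interior $v_i$, crosses $T_{v_i}$ between the leaf for $\{v_{i-1},v_i\}$ and the leaf for $\{v_i,v_{i+1}\}$ (cost at most $2h$), at the two endpoints travels between the root and a leaf (cost at most $h$ each), and between consecutive $v_i$'s traverses a full length-$\ell$ subdivision. This yields
\[
d_M(F(u),F(w)) \le k\ell + 2kh.
\]

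For the lower bound I would contract each tree $T_v$ to a single vertex; the quotient is precisely the $\ell$-subdivision $G^{(\ell)}$ of $G$, whose graph distance between original vertices is $\ell\, d_G(u,w)$. Since the quotient map is $1$-Lipschitz and sends $F(u),F(w)$ to $u,w$, we obtain $d_M(F(u),F(w)) \ge \ell\, d_G(u,w)$. Taking $\ell := \lceil 2h/\ep \rceil$ makes $2kh \le \ep\, k\ell$ for $k\ge 1$, which completes the proof. The only place that needs a moment of care is the lower bound: a naive projection that also crushes each subdivision path onto a single $G$-edge would only give $d_M \ge d_G$, losing the factor $\ell$; the fix, as above, is to collapse only the trees and leave the subdivision paths intact in the quotient.
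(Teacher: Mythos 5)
Your proof is correct and follows essentially the same construction as the paper: a bounded-depth tree gadget at each vertex, with each edge of $G$ realized as a fresh path of length $\ell$ glued at distinct leaves, and $\ell$ chosen so large that the additive $O(h)$ cost of crossing the gadgets is absorbed into the factor $1+\ep$. The only difference is presentational: the paper verifies the lower bound by tracking the sequence of trees a path visits, whereas you contract the trees to obtain the $\ell$-subdivision of $G$ and invoke the $1$-Lipschitz quotient map --- a slightly cleaner formalization of the same estimate.
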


\begin{proof} Let $\Delta_G$ be the maximum degree of $G$
and let $r\in\mathbb{N}$ be such that $3\cdot 2^{r-1}\ge\Delta_G$.
Let $\ell\in\mathbb{N}$ be such that
$\frac{\ell+2r}{\ell}<1+\varepsilon$. We define the graph $M$ in
the following way. For each vertex $v$ of $G$ the graph $M$
contains a $3$-regular tree of depth $r$ rooted at a vertex which
we denote $m(v)$. For each edge $uv$ of $G$ we pick a leaf of the
tree rooted at $m(v)$ and a leaf of the tree rooted at $m(u)$ and
join them by a path of length $\ell$. Leaves picked for different
edges are different (this is possible because $3\cdot
2^{r-1}\ge\Delta_G$), and there is no further interaction between
the constructed trees and paths. It is easy to see that the
maximum degree of $M$ is $3$.
\medskip

We map $V(G)$ into $V(M)$ by mapping each $v$ to the corresponding
$m(v)$. It remains to show that \begin{equation}\label{E:GvsM}\ell
d_G(u,v)\le d_M(m(u),m(v))\le (\ell+2r)d_G(u,v)\end{equation}

The right-hand side inequality follows from the observation that
if $u$ and $v$ are adjacent in $G$, then
$d_M(m(u),m(v))\le\ell+2r$, the path of length $\ell+2r$ can be
constructed as the union of path in $M$ corresponding to $uv$, and
paths from $m(u)$ and $m(v)$ to the corresponding leaves.
\medskip

To prove the left-hand side of \eqref{E:GvsM} we consider a path
joining $m(u)$ and $m(v)$. Let $m(u_1),\dots,m(u_k)$ be the set of
roots of those trees which are visited by the path, listed in the
order of visits. The description of $M$ implies that
$u,u_1,\dots,u_k,v$ is a $uv$-walk in $G$, hence its length is
$\ge d_G(u,v)$. In order to get from one tree to another we need
to traverse $\ell$ edges. Hence any path joining $m(u)$ and $m(v)$
has length $\ge \ell d_G(u,v)$. This completes the proof of the
lemma and the cotype part of the theorem.\end{proof}

\begin{proof}[Proof of the type part of Theorem \ref{T:deg3}] By \cite[Theorem
2.3]{MP76}, each space with no type $>1$ contains subspaces whose
Banach-Mazur distances to $\ell_1^d$ $(d\in\mathbb{N})$ are
arbitrarily close to $1$. Therefore it suffices to check that each
of the graphs obtained in a similar way from Hamming cubes admits
a uniformly bilipschitz embedding into $\ell_1^d$ for sufficiently
large $d$. We denote by $\{S_n\}_{n=1}^\infty$ graphs obtained
from $\{H_n\}_{n=1}^\infty$ using the procedure described in the
proof of Lemma \ref{L:GM}. We describe an embedding of the vertex
set of $S_n$ into $\ell_1^k$. The images of vertices of $S_n$
under this embedding are integer points of $\ell_1^k$, edges of
$S_n$ correspond to line segments of length $1$ parallel to unit
vectors of $\ell_1^k$. Having such a representation of $S_n$, it
remains to show that the identity mappings of vertex sets of $S_n$
endowed with their graph distances and their $\ell_1$-distances
are uniformly bilipschitz.\medskip

The graph $H_n$ is $n$-regular, so we let $r\in\mathbb{N}$ be such
that $n\le 3\cdot 2^{r-1}$ and consider a rooted $3$-regular tree
of depth $r$. This tree can be isometrically embedded into
$\ell_1^m$, where $m=3+3\cdot 2+\dots+3\cdot 2^{r-1}$. The
embedding is the following: observing that $m$ is the number of
edges in the tree, we find a bijection between unit vectors in
$\ell_1^m$ and edges of the tree. Now we map the root of the tree
to $0\in\ell_1^m$; if $v$ is different from the root, we map $v$
to the sum of unit vectors corresponding to the path from $v$ to
the root. We denote by $T_r$ the image of the tree in
$\ell_1^m$.\medskip

We consider the natural isometric embedding of $H_n$ into
$\ell_1^n$, with images of the vertices being all possible
$0,1$-sequences. We pick $\ell$ in the same way as in Lemma
\ref{L:GM}. We specify the position of the rooted tree
corresponding to the vertex $v=\{\theta_i\}_{i=1}^n$ of $H_n$ in
$\ell_1^k=\ell_1^m\oplus_1\ell_1^n$ as
$T_r+\ell\cdot\{\theta_i\}$, where we mean that
$T_r\subset\ell_1^m$ and $\ell\cdot\{\theta_i\}$ is a multiple of
$v$ considered as a vector in $\ell_1^n$.\medskip

We introduce and embed the paths of length $\ell$ (from the
construction of Lemma \ref{L:GM}) in the following way: Since $n$
is $\le$ the number of leaves in $T_r$, there is a bijection
between the unit vectors of $\ell_1^n$ and some subset of leaves
of $T_r$. On the other hand, each edge of $H_n$ is parallel to one
of the unit vectors. We add $\ell$-paths in the following way. The
path corresponding to the edge between $v$ and $v+e_t$ ($e_t$ is a
unit vector of $\ell_1^n$) is the straight line path of length
$\ell$ joining the leaves of $T_r+\ell v$ and $T_r+\ell(v+e_t)$;
in each of the trees the leaf is chosen in such a way that its
$\ell_1^m$ component is the leaf corresponding to $e_t$.\medskip

It is clear that the graph $S_n$ obtained in this way fits the
description of $M$ in the proof of Lemma \ref{L:GM}. Therefore the
natural embedding of $H_n$ into $S_n$ is $(1+\ep)$-bilipschitz if
both graphs are endowed with their graph distances. It remains to
estimate the bilipschitz constants of natural embeddings of $S_n$
into $\ell_1^k$.\medskip

Observe that the graph distance between two vertices of $S_n$
cannot be less than the distance between their images in
$\ell_1^n\oplus_1\ell_1^{m}$, because each edge corresponds to a
line segment of length $1$. It remains to show that the graph
distance between two vertices of $S_n$ cannot be much larger than
the $\ell_1$-distance. Let $x,y$ be two vertices of $S_n$, we need
to estimate $d_{S_n}(x,y)$ from above in terms of $||x-y||_1$.
\medskip

For each set of vertices of the form $T_r+\ell v$ we consider its
union with the set of all vertices of $\ell$-paths going out of
this set.  It is easy to see that if both $x$ and $y$ belong to
one of such sets, then $d_{S_n}(x,y)\le ||x-y||_1$.
\medskip

For $x\in V(S_n)$ denote the projection of $x$ to $\ell_1^n$ by
$\pi(x)$, and the $i$-th coordinate of this projection by
$\pi(x)_i$. If the situation described in the previous paragraph
does not occur then there exists $i\in\{1,\dots,n\}$ such that
$|\pi(x)_i-\pi(y)_i|=\ell$. Let $k\le n$ be the number of
coordinates for which this equality holds.\medskip

We have $||x-y||_1\ge ||\pi(x)-\pi(y)||_1\ge k\ell$. To estimate
$d_{S_n}(x,y)$ from above we construct the following $xy$-path in
$S_n$. If one of the numbers $\pi(x)_i$ is strictly between $0$
and $\ell$ we start by moving from $x$ in the direction of
$\pi(y)_i$ (which in this case should be $0$ or $\ell$) till we
reach a set of the form $T_r+\ell w_x$ for some vertex $w_x$ of
$H_n$.
\medskip

We do similar thing at the other end of the path (near $y$): If
one of the numbers $\pi(y)_i$ is strictly between $0$ and $\ell$
we end the path by moving from $y$ in the direction of $\pi(x)_i$
(which in this case should be $0$ or $\ell$) till we reach a set
of the form $T_r+\ell w_y$.
\medskip

We find a shortest path between $w_x$ and $w_y$ in $H_n$. It is
easy to see that it has length $k$. Now we continue construction
of the $xy$-path in $S_n$. This path will contain all paths of
length $\ell$ corresponding to the edges of the $w_xw_y$-path in
$H_n$. Between these paths we add the pieces of the corresponding
trees of the from $T_r+\ell u$, needed to make a path. As a result
we get an $xy$-path of length $<2\ell+k\ell+2(k+1)r$. If $4r\le
\ell$ (we can definitely assume this), we have
$2\ell+k\ell+2(k+1)r\le 4k\ell$. In such a case $d_{S_n}(x,y)\le
4||x-y||_1$. This completes the proof of the type part of the
theorem.\end{proof}

\begin{corollary} There exists a family $\{K_n\}$ of constant degree
expanders, such that a Banach space $X$ for which there exist
uniformly bilipschitz embeddings of $\{K_n\}$ into $X$, has no
cotype.
\end{corollary}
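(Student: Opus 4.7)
The plan is as follows. By Lemma \ref{L:GM} applied to the lattice graphs $L_{n,n}$, we obtain max-degree-$3$ graphs $M_n$ containing $L_{n,n}$ with bilipschitz distortion $1+\varepsilon$. By the Mendel--Naor characterization \cite{MN08} of Banach spaces with no cotype in terms of uniform bilipschitz embeddability of $\{L_{m,n}\}$, it suffices to construct constant-degree expanders $\{K_n\}$ that bilipschitz contain $L_{n,n}$ with uniform distortion; indeed, the restriction of any uniformly bilipschitz embedding of $K_n$ to $V(L_{n,n})$ would give uniformly bilipschitz embeddings of the lattice graphs.

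I would construct such $K_n$ by augmenting $M_n$ with an auxiliary expander. Concretely, let $E_n$ be a constant-degree expander on $N_n$ vertices, where $N_n$ is a sufficiently large polynomial in $|V(M_n)|$, and take $K_n$ to be the disjoint union of $M_n$ and $E_n$ joined by a matching $\mu\colon V(M_n)\to V(E_n)$. The resulting $K_n$ has bounded degree. Expansion of $K_n$ follows from expansion of $E_n$ together with the matching: for any $S\subset V(K_n)$ with $|S|\le|V(K_n)|/2$, either $S\cap V(E_n)$ or its complement in $V(E_n)$ has a linear $E_n$-boundary, and the matching edges out of the $V(M_n)$-portion of $S$ contribute additional boundary whenever their images under $\mu$ lie outside $S$.

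With a uniformly random choice of $\mu$, a standard union-bound argument --- using the volume bound $|B_{E_n}(r)|\le d^r$ for the degree bound $d$ of $E_n$, and the fact that $N_n$ is polynomially larger than $|V(M_n)|$ --- shows that with positive probability every pair of images $\mu(u),\mu(v)$ with $u\ne v$ satisfies $d_{E_n}(\mu(u),\mu(v))\ge c\ell n$, where $\ell n$ is of the order of the diameter of $L_{n,n}$ inside $M_n$. Fixing such a $\mu$, any path in $K_n$ from $u$ to $v$ in $V(L_{n,n})$ either stays inside $M_n$ (and so has length $\ge\ell\, d_{L_{n,n}}(u,v)$) or uses at least one pair of matching edges, in which case it must traverse at least $c\ell n\ge c\ell\, d_{L_{n,n}}(u,v)$ steps in $E_n$. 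This yields uniform bilipschitz equivalence between the induced metric on $V(L_{n,n})\subset V(K_n)$ and the lattice metric of $L_{n,n}$.

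The main obstacle is balancing the two requirements simultaneously: $K_n$ must be globally well-connected to be an expander, while the slow-growing lattice metric on $V(L_{n,n})$ must be preserved, which forbids short shortcuts through the highly connected part $E_n$. This tension is resolved by the observation that the diameter of $L_{n,n}$ inside $M_n$ is already of order $\log|V(M_n)|$, so taking $E_n$ polynomially larger keeps its (logarithmic) diameter above the relevant scale, and the random matching separates lattice vertices in $E_n$ with high probability.
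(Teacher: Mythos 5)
Your overall strategy---attaching $M_n$ to a large auxiliary expander in such a way that the expander's edges cannot create shortcuts between the relevant vertices of $M_n$---is essentially the idea of the paper's proof, and the reduction via Lemma~\ref{L:GM} and the Mendel--Naor characterization \cite{MN08} is the intended one. (The paper glues differently: it places $V(M_n)$ on a $D_n$-separated subset of a constant-degree expander $G_n$, where $D_n=\mathrm{diam}(M_n)$, and overlays the edges of $M_n$; since any $G_n$-path between two such vertices has length $\ge D_n\ge d_{M_n}(u,v)$, the copy of $M_n$ is isometric. Your matching construction achieves the same effect, and your expansion argument for $K_n$ is workable.)

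There is, however, a genuine quantitative error in the step where you claim to resolve the ``tension.'' You assert that the diameter of $L_{n,n}$ inside $M_n$ is of order $\log|V(M_n)|$ and conclude that an auxiliary expander $E_n$ of size polynomial in $|V(M_n)|$ suffices. Both claims are false. Applying Lemma~\ref{L:GM} to $G=L_{n,n}$ one has $\Delta_G=3^n-1$, hence $r=\Theta(n)$ and $\ell=\Theta(r/\varepsilon)=\Theta(n)$, so $\mathrm{diam}(M_n)=\Theta(\ell\cdot\mathrm{diam}(L_{n,n}))=\Theta(n^2)$, whereas $\log|V(M_n)|=\Theta(n\log n)$. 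A constant-degree expander on $N_n$ vertices has diameter $O(\log N_n)$, so if $N_n$ is only polynomial in $|V(M_n)|$ then \emph{every} pair of vertices of $E_n$ is at distance $O(n\log n)=o(\ell n)$; the separated matching you need cannot exist, and the resulting $K_n$ collapses the metric: two lattice vertices at $M_n$-distance $\Theta(n^2)$ would be at $K_n$-distance $O(n\log n)$ via the detour through $E_n$, so the induced embeddings of $L_{n,n}$ into $K_n$ would not be uniformly bilipschitz and the whole reduction fails. The repair is easy, since nothing in the statement bounds $|V(K_n)|$: take $N_n\ge d^{\,C\,\mathrm{diam}(M_n)}$ (exponential in $n^2$, superpolynomial in $|V(M_n)|$); then a union bound, or the greedy argument implicit in the paper's ``we may assume $G_n$ contains a $D_n$-separated set,'' produces a matching with $d_{E_n}(\mu(u),\mu(v))\ge\mathrm{diam}(M_n)$ for all $u\ne v$, and the rest of your argument goes through.
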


\begin{proof} Let $\{M_n\}$ be graphs of maximum degree $3$
from the metric characterization of Banach spaces with no cotype.
It suffices to show that there exists a family $\{K_n\}$ of
constant degree expanders containing subsets isometric to
$\{M_n\}$. Consider  any family $\{G_k\}$ of constant $d$-regular
expanders with the growing number of vertices. Let $D_n$ be the
diameter of $M_n$ and $m_n$ be its number of vertices. It is clear
that we may assume without loss of generality that $G_n$ contains
a $D_n$-separated set of cardinality $m_n$. We fix a bijection
between this set and $V(M_n)$. We add to $G_n$ edges between
vertices corresponding to adjacent vertices of $M_n$. Since $D_n$
is the diameter of $M_n$, the obtained graph contains an isometric
copy of $M_n$. The maximum degree of the obtained graph is $\le
d+3$. Its expanding properties are not worse than those of $G_n$.
Adding as many self-loops to it as is needed we get a
$(d+3)$-regular graph $K_n$. It is clear that $\{K_n\}$ is a
desired family of $(d+3)$-regular expanders.
\end{proof}

\section{Diamonds and Laakso graphs}

The {\it diamond graph} of level $0$ is denoted $D_0$. It has two
vertices joined by an edge of length $1$. $D_i$ is obtained from
$D_{i-1}$ as follows. Given an edge $uv\in E(D_{i-1})$, it is
replaced by a quadrilateral $u, a, v, b$ with edge lengths
$2^{-i}$. We endow $D_n$ with their shortest path metrics. We
consider the vertex of $D_n$ as a subset of the vertex set of
$D_{n+1}$, it is easy to check that this defines an isometric
embedding. We introduce $D_\omega$ as the union of the vertex sets
of $\{D_n\}_{n=0}^\infty$. For $u,v\in D_\omega$ we introduce
$d_{D_\omega}(u,v)$ as $d_{D_n}(u,v)$ where $n\in\mathbb{N}$ is
any integer for which $u,v\in V(D_n)$. Since the natural
embeddings $D_n\to D_{n+1}$ are isometric, it is easy to see that
$d_{D_n}(u,v)$ does not depend on the choice of $n$ for which
$u,v\in V(D_n)$.\medskip

\begin{definition}[\cite{Jam72} or {\cite[p.~34]{Bou83}}]\label{D:tree}  {\rm Let $\delta>0$.
A sequence $\{x_i\}_{i=1}^\infty$ is called a {\it $\delta$-tree}
if  $x_i=\frac12(x_{2i}+x_{2i+1})$ and
$||x_{2i}-x_i||=||x_{2i+1}-x_i||\ge\delta$.}\end{definition}

\begin{theorem}\label{T:DiamTree} If $D_\omega$ is bilipschitz embeddable into a
Banach space $X$, then $X$ contains a bounded $\delta$-tree for
some $\delta>0$.
\end{theorem}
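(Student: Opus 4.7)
The plan is to build an explicit bounded $\delta$-tree by attaching to each node of the infinite binary tree an edge of some $D_n$ and defining the tree vectors as rescaled differences of the $f$-images of the edge endpoints. The recursive structure of $D_\omega$, together with the fact that each edge of $D_n$ has two distinct midpoints in $D_{n+1}$, is what will let us get both the exact midpoint identity and a uniform separation bound.

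First I would normalize the embedding so that $d_{D_\omega}(u,v)\le\|f(u)-f(v)\|\le C\,d_{D_\omega}(u,v)$. Then, indexing the nodes of the infinite binary tree by finite binary strings $\sigma\in\{0,1\}^{<\omega}$, I would assign edges $e_\sigma=(p_\sigma,q_\sigma)\in E(D_{|\sigma|})$ recursively: set $e_\emptyset=(s,t)$, and given $e_\sigma$ with midpoints $a_\sigma,b_\sigma$ in $D_{|\sigma|+1}$, choose $m_\sigma\in\{a_\sigma,b_\sigma\}$ as a \emph{far midpoint} in the sense that
\[
\left\|f(m_\sigma)-\tfrac{f(p_\sigma)+f(q_\sigma)}{2}\right\|\ge 2^{-|\sigma|-1}.
\]
Such a choice always exists: since $\|f(a_\sigma)-f(b_\sigma)\|\ge d(a_\sigma,b_\sigma)=2^{-|\sigma|}$, the triangle inequality around $\tfrac12(f(p_\sigma)+f(q_\sigma))$ forces at least one of $f(a_\sigma),f(b_\sigma)$ to be that far. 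I then set $e_{\sigma 0}=(p_\sigma,m_\sigma)$ and $e_{\sigma 1}=(m_\sigma,q_\sigma)$, the two sub-edges of $e_\sigma$ in $D_{|\sigma|+1}$ meeting at $m_\sigma$.

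Next I would define $x_\sigma:=2^{|\sigma|}\bigl(f(q_\sigma)-f(p_\sigma)\bigr)$ and relabel by the standard binary-tree indexing to obtain $\{x_i\}_{i=1}^{\infty}$. Boundedness is immediate from the upper bilipschitz bound, $\|x_\sigma\|\le 2^{|\sigma|}\cdot C\cdot 2^{-|\sigma|}=C$. The midpoint identity follows by telescoping along the path $p_\sigma\to m_\sigma\to q_\sigma$:
\[
x_{\sigma 0}+x_{\sigma 1}=2^{|\sigma|+1}\bigl((f(m_\sigma)-f(p_\sigma))+(f(q_\sigma)-f(m_\sigma))\bigr)=2x_\sigma.
\]
For separation, a direct computation gives
\[
\|x_{\sigma 0}-x_\sigma\|=\|x_{\sigma 1}-x_\sigma\|=2^{|\sigma|+1}\left\|f(m_\sigma)-\tfrac{f(p_\sigma)+f(q_\sigma)}{2}\right\|\ge 1,
\]
so $\{x_i\}$ is a bounded $\delta$-tree with $\delta=1$.

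The step I would expect to be the main conceptual obstacle is choosing the two children edges correctly. The tempting but wrong move is to take them from the two \emph{opposite} sub-edges of the parent quadrilateral; that only yields the midpoint identity approximately, with an error that, after the rescaling by $2^{|\sigma|}$ needed to secure a constant separation, becomes of constant order per level and is not fixable by any averaging limit. Insisting instead that both children share a common midpoint $m_\sigma$ of the parent edge is the crucial structural choice: the midpoint identity then becomes a free consequence of telescoping, while the non-uniqueness of metric midpoints in $D_\omega$ supplies, via the pigeonhole estimate above, the required uniform lower bound on $\|x_{\sigma 0}-x_\sigma\|$ after rescaling.
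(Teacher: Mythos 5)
Your proposal is correct and follows essentially the same route as the paper: at each edge you pick, via the triangle-inequality pigeonhole, the midpoint of the quadrilateral whose image lies far from the average of the images of the endpoints, take the two sub-edges through that midpoint as children, and rescale by $2^{|\sigma|}$ to get the exact midpoint identity by telescoping together with the uniform separation. The only difference is cosmetic (your normalization $d\le\|f(u)-f(v)\|\le C\,d$ versus the paper's $\delta d\le\|f(u)-f(v)\|\le d$, and your explicit binary-string indexing of the recursion).
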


It is well-known that Banach spaces with the RNP do not contain
bounded $\delta$-trees (see \cite[p.~31]{Bou83}). On the other
hand there exist Banach spaces without the RNP which do not
contain bounded $\delta$-trees, see \cite[p.~54]{BR80}. So Theorem
\ref{T:DiamTree} implies:

\begin{corollary} If $D_\omega$ is bilipschitz embeddable into a
Banach space $X$, then $X$ does not have the Radon-Nikod\'{y}m
property. The converse is not true.
\end{corollary}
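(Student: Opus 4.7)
The plan is to construct a bounded $\delta$-tree in $X$ directly from the bilipschitz embedding $f\colon D_\omega\to X$. After rescaling we may assume $d(u,v)\le\|f(u)-f(v)\|\le Cd(u,v)$ on $D_\omega$. The structural feature to exploit is that every edge $e=uv$ of $D_n$ has a pair of ``antipodal midpoints'' $a_e,b_e\in V(D_{n+1})$ at graph-distance $2^{-n}$ apart, each at graph-distance $2^{-(n+1)}$ from $u$ and from $v$; consequently $\omega_e:=\frac12(f(a_e)-f(b_e))$ satisfies $\|\omega_e\|\in[2^{-(n+1)},C\cdot 2^{-(n+1)}]$, and $f(a_e),f(b_e)$ are both approximate midpoints of $(f(u),f(v))$ in $X$.

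I would build the tree inductively. The root is $x_1:=\frac12(f(s)+f(t))$, with children $x_2:=x_1+\omega_{st}$ and $x_3:=x_1-\omega_{st}$; this immediately gives the exact midpoint identity $x_1=\frac12(x_2+x_3)$ and $\|x_2-x_1\|=\|x_3-x_1\|=\|\omega_{st}\|\ge 1/2$. For deeper nodes, each $x_i$ is associated to an edge $e_i$ of $D_\omega$ at a suitably chosen diamond level, with children $x_{2i}:=x_i+\alpha_i\omega_{e_i}$ and $x_{2i+1}:=x_i-\alpha_i\omega_{e_i}$ for a rescaling $\alpha_i>0$; the exact midpoint condition then holds by construction and $\|x_{2i}-x_i\|=\alpha_i\|\omega_{e_i}\|$.

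The main obstacle is reconciling two conflicting demands: a uniform $\delta$-lower-bound forces either working at a bounded diamond level (where only finitely many edges are available) or else rescaling $\omega_e$ by $2^{n(e)+1}$; whereas boundedness of the tree forbids unrestrained accumulation of offsets $\sum\pm\alpha_k\omega_{e_k}$ along a branch. Naive rescaling yields a valid $\delta$-tree but of logarithmically or linearly growing diameter. The resolution must exploit the $4$-ary self-similarity of $D_\omega$: each edge $e$ is the root of a scaled copy of $D_\omega$, and the sub-midpoint vectors satisfy algebraic identities from each quadrilateral---specifically the sum over the four sub-edges of a parent edge recovers the parent's midpoint structure up to a defect of order $C\cdot 2^{-n}$---so that rescaled branching vectors can be made to telescope rather than accumulate when the tree is organized coherently along the self-similar hierarchy. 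A cleaner alternative, closer in spirit to Cheeger--Kleiner differentiation, is to define $x_i:=\int f\,d\mu_i$ for probability measures $\{\mu_i\}$ on $V(D_\omega)$ satisfying $\mu_i=\frac12(\mu_{2i}+\mu_{2i+1})$ and supported on successively refined sub-diamond ``halves'': boundedness is then automatic, since each $x_i$ lies in $\operatorname{conv}(f(V(D_\omega)))$, and the midpoint identity holds by construction, reducing the task to producing a uniform lower bound $\|x_{2i}-x_{2i+1}\|\ge 2\delta$, which should follow after a renormalization or ultraproduct argument that converts the scale-dependent bound $\|\omega_e\|\gtrsim 2^{-n(e)}$ into a uniform one.
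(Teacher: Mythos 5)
You have correctly isolated the central difficulty---a uniform $\delta$-separation seems to require rescaling by $2^{n+1}$ at level $n$, while boundedness forbids accumulating rescaled offsets along a branch---but neither of your proposed resolutions closes the gap. The ``telescoping of offsets along the self-similar hierarchy'' is not carried out, and the ``cleaner alternative'' via measures is doomed as stated: if $x_i=\int f\,d\mu_i$ with $\mu_i$ supported on a level-$n$ sub-diamond, then $\|x_{2i}-x_{2i+1}\|$ is at most the diameter of $f$ of that sub-diamond, which is $O(2^{-n})$, so siblings become arbitrarily close and no uniform lower bound $\|x_{2i}-x_{2i+1}\|\ge 2\delta$ can hold. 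A ``renormalization or ultraproduct argument'' cannot convert a quantity that genuinely tends to zero into one bounded below while keeping the same vectors; you would have to change what the tree nodes are, which is exactly the missing step.

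The idea that resolves the tension (and is the paper's proof of the underlying Theorem on $\delta$-trees) is to take the tree nodes to be rescaled \emph{increments} of $f$ along edges, not positions, averages, or positions plus offsets. For an edge $pq$ of $D_n$ set $x=2^{n}\bigl(f(p)-f(q)\bigr)$; its two children are $2^{n+1}\bigl(f(p)-f(a)\bigr)$ and $2^{n+1}\bigl(f(a)-f(q)\bigr)$, where $a$ is one of the two midpoints $a,b$ of $pq$ in $D_{n+1}$. The midpoint identity $x=\frac12(x_{2i}+x_{2i+1})$ is then the exact telescoping $f(p)-f(q)=(f(p)-f(a))+(f(a)-f(q))$; boundedness is automatic because each increment is taken over an edge of length $2^{-(n+1)}$, so the upper Lipschitz bound gives norm at most $1$ after rescaling --- no accumulation occurs; and since $d(a,b)=2^{-n}$, the lower Lipschitz bound gives $\|f(a)-f(b)\|\ge\delta 2^{-n}$, so at least one of $a,b$ satisfies $\|f(p)+f(q)-2f(a)\|\ge\delta 2^{-n}$, which after rescaling is exactly the $\delta$-separation of the two children. (You should also record the ``converse'' half of the statement, which your proposal does not address: it follows from the existence, due to Bourgain and Rosenthal, of a Banach space failing the RNP that contains no bounded $\delta$-tree, so the diamond criterion cannot characterize the RNP.)
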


\begin{proof}[Proof of Theorem \ref{T:DiamTree}]
Let $f:D_\omega\to X$ be a bilipschitz embedding. Without loss of
generality we assume that
\begin{equation}\label{E:delta}
\delta d_{D_\omega}(x,y)\le||f(x)-f(y)||\le d_{D_\omega}(x,y)
\end{equation}
for some $\delta>0$.
\medskip

Let us show that this implies that the unit ball of $X$ contains a
$\delta$-tree. The first element of the tree will be
$x_1=f(u_0)-f(v_0)$, where $\{u_0,v_0\}=V(D_0)$.
\medskip

Now we consider the quadrilateral $u_0,a,v_0,b$. Inequality
\eqref{E:delta} implies $||f(a)-f(b)||\ge\delta$. Consider two
pairs of vectors (corresponding to two different paths from $u$ to
$v$ in $D_1$):
\medskip

\noindent{\bf Pair 1:} $f(v_0)-f(a)$, $f(a)-f(u_0)$.\qquad {\bf
Pair 2:} $f(v_0)-f(b)$, $f(b)-f(u_0)$.
\medskip

The inequality $||f(a)-f(b)||\ge\delta$ implies that at least one
of the following is true
\[||(f(v_0)-f(a))-(f(a)-f(u_0))||\ge\delta\quad \hbox{ or }\quad
||(f(v_0)-f(b))-(f(b)-f(u_0))||\ge\delta.
\]

Suppose that the first inequality holds. We let
\[x_2=2(f(v_0)-f(a))\quad \hbox{and}\quad x_3=2(f(a)-f(u_0)).\] It is clear that both
conditions of Definition \ref{D:tree} are satisfied. Also, the
condition \eqref{E:delta} implies that $||x_2||,||x_3||\le 1$.
\medskip

We continue construction of the $\delta$-tree in the unit ball of
$X$ in a similar manner. For example, to construct $x_4$ and $x_5$
we consider the corresponding quadrilateral $a,a_1,v_0,b_1$ in
$D_2$. The inequality $||f(a_1)-f(b_1)||\ge\delta/2$ implies that
at least one of the following is true
\[||(f(v_0)-f(a_1))-(f(a_1)-f(a))||\ge\delta/2\hbox{ or }
||(f(v_0)-f(b_1))-(f(b_1)-f(a))||\ge\delta/2.
\]

Suppose that the second inequality holds. We let
\[x_4=4(f(v_0)-f(b_1))\quad \hbox{and}\quad x_5=4(f(b_1)-f(a)).\] It is clear that both
conditions of Definition \ref{D:tree} are satisfied. Also
\eqref{E:delta} implies that $||x_4||,||x_5||\le 1$. Proceeding in
an obvious way we get a $\delta$-tree in the unit ball of $X$.
\end{proof}

\subsection{Finite version and the Johnson-Schechtman
characterization of superreflexivity}

\begin{definition}[\cite{Jam72}] {\rm A Banach space $X$
has the {\it finite tree property} if there exist $\delta>0$ such
that for each $k\in\mathbb{N}$ the unit ball of $X$ contains a
finite sequence $\{x_i:~i=1,\dots,2^{k}-1\}$ such that
$x_i=\frac12(x_{2i}+x_{2i+1})$ and
$||x_{2i}-x_i||=||x_{2i+1}-x_i||\ge\delta$ for each
$i=1,\dots,2^{k-1}-1$.}
\end{definition}

It is clear that the proof of Theorem \ref{T:DiamTree} implies its
finite version:

\begin{corollary}\label{C:FinDiamTree} If there exist uniformly bilipschitz embeddings of
$\{D_n\}_{n=1}^\infty$ into a Banach space $X$, then $X$ has the
finite tree property.
\end{corollary}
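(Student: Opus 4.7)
The plan is to observe that the construction in the proof of Theorem \ref{T:DiamTree} is entirely finitary: to produce the tree nodes of depth $k$ it only needs access to quadrilaterals nested $k$ levels deep, which live inside $V(D_k) \subset V(D_\omega)$. So all I need to do is replay that construction using a single uniformly bilipschitz embedding $f_n : D_n \to X$, stop at level $k \le n$, and track that the resulting parameter $\delta$ depends only on the uniform bilipschitz constant, not on $n$ or $k$.

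More precisely: let $f_n : D_n \to X$ be $C$-bilipschitz for all $n$, and after a scaling of each $f_n$ assume
\[
\delta \, d_{D_n}(x,y) \le \|f_n(x)-f_n(y)\| \le d_{D_n}(x,y) \qquad \forall x,y \in V(D_n),
\]
where $\delta = 1/C$ is independent of $n$. Given $k \in \mathbb{N}$, fix any $n \ge k$ and apply the construction from the proof of Theorem \ref{T:DiamTree} to $f_n$, but halting after $k$ levels have been built. At step $j$ (for $1 \le j \le k$), the construction examines a quadrilateral of $D_j \subset D_n$ with edges of length $2^{-j}$, uses the lower bound of \eqref{E:delta} applied to the two opposite ``middle'' vertices to obtain a displacement of norm $\ge \delta \cdot 2^{-j}$, then picks the pair of consecutive displacements (Pair 1 or Pair 2) whose difference has norm $\ge \delta \cdot 2^{-j}$, and defines the two children $x_{2i}, x_{2i+1}$ by multiplying those displacements by $2^{j-1}$.

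The three things to verify are all immediate from the same arguments used for Theorem \ref{T:DiamTree}: (i) the midpoint identity $x_i = \tfrac12(x_{2i}+x_{2i+1})$ holds because the two displacements in each pair sum telescopically to the parent displacement (with the scaling factor $2^{j-1}$ arranged to match); (ii) the separation $\|x_{2i}-x_i\| = \|x_{2i+1}-x_i\| \ge \delta$ holds because the scale $2^{j-1}$ exactly cancels the $2^{-j}$ in $\delta \cdot 2^{-j}$ (losing only a factor of $2$ which can be absorbed into $\delta$); (iii) $\|x_i\| \le 1$ for every $i$ with $2^{j-1} \le i < 2^j$ holds because each $x_i$ is $2^{j-1}$ times a difference $f_n(p)-f_n(q)$ with $d_{D_n}(p,q) = 2^{-(j-1)}$, and $\|f_n(p)-f_n(q)\| \le 2^{-(j-1)}$ by the upper bound of \eqref{E:delta}. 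This produces the desired sequence $\{x_i : i=1,\dots,2^k-1\}$ in the unit ball of $X$ satisfying the finite tree property with a single $\delta = 1/(2C)$ independent of $k$.

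There is no real obstacle here; the only mild bookkeeping point is to make sure that the constant $\delta$ chosen at the outset (depending only on $C$) serves all values of $k$ simultaneously, which is why one normalizes every $f_n$ by the same scaling rule before beginning. Since $k$ was arbitrary, this gives the finite tree property.
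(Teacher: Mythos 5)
Your proposal is correct and takes essentially the same route as the paper, which simply observes that the construction in the proof of Theorem \ref{T:DiamTree} is finitary and hence yields the finite tree property with a $\delta$ depending only on the uniform bilipschitz constant; you have just written out the bookkeeping explicitly. The small factor-of-$2$ slack in your scaling conventions is harmless, since $\delta$ only needs to be independent of $k$ and $n$.
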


Combining Corollary \ref{C:FinDiamTree} with the well-known fact
(see \cite{Jam72} and \cite{Enf72}) that the finite tree property
is equivalent to nonsuperreflexivity, we get the second part of
the result in \cite[p.~181]{JS09}: uniform bilipschitz
embeddability of $\{D_n\}_{n=1}^\infty$ into $X$ implies the
nonsuperreflexivity of $X$.

\subsection{Laakso space}

Our version of the Laakso space (originally constructed in
\cite{Laa00}) is similar to the version from \cite[p.~290]{LP01}.
However, our version is a countable set (dense in the version of
the space from \cite{LP01}). The {\it Laakso graph} of level $0$
is denoted $L_0$. It consists of two vertices joined by an edge of
length $1$. The {\it Laakso graph} $L_i$ is obtained from
$L_{i-1}$ as follows. Each edge $uv\in E(L_{i-1})$ of length
$4^{-i+1}$ is replaced by a graph with $6$ vertices $u, t_1, t_2,
o_1, o_2, v$ where $o_1, t_1, o_2, t_2$ form a quadrilateral, and
there are only two more edges $ut_1$ and $vt_2$, with all edge
lengths $4^{-i}$. 
We endow $L_n$ with their shortest path metrics. We consider the
vertex of $L_n$ as a subset of the vertex set of $L_{n+1}$, it is
easy to check that this defines an isometric embedding. We
introduce the {\it Laakso space} $L_\omega$ as the union of the
vertex sets of $\{L_n\}_{n=0}^\infty$. For $u,v\in L_\omega$ we
introduce $d_{L_\omega}(u,v)$ as $d_{L_n}(u,v)$ where
$n\in\mathbb{N}$ is any integer for which $u,v\in V(L_n)$. Since
the natural embeddings $L_n\to L_{n+1}$ are isometric, it is easy
to see that $d_{L_n}(u,v)$ does not depend on the choice of $n$
for which $u,v\in V(L_n)$.\medskip

Our next purpose is to give a new proof of the following result of
Cheeger and Kleiner \cite[Corollary 1.7]{CK09}:

\begin{theorem}\label{T:LaakTree} If $L_\omega$ is bilipschitz embeddable into a
Banach space $X$, then $X$ does not have the Radon-Nikod\'{y}m
property.
\end{theorem}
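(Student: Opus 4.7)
The plan is to adapt the proof of Theorem~\ref{T:DiamTree} by exploiting the fact that the inner quadrilateral $(t_1,o_1,t_2,o_2)$ inside every Laakso gadget is isometric to a scaled copy of $D_1$, so that the two-paths and two-midpoints argument used for the diamond applies verbatim at each gadget. Given a bilipschitz embedding $f\colon L_\omega \to X$, I would set $x_1 = f(v_0) - f(u_0)$ and examine the $L_1$ gadget subdividing the edge $u_0 v_0$: the pair $(u_0,v_0)$ admits the two distinct midpoints $o_1,o_2$, both at distance $1/2$ from $u_0$ and $v_0$, and bilipschitzness forces $\|f(o_1) - f(o_2)\| \ge \delta/2$. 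The identity $f(v_0) - f(u_0) = (f(v_0) - f(o_i)) + (f(o_i) - f(u_0))$ holds for both $i \in \{1,2\}$; as in the proof of Theorem~\ref{T:DiamTree}, the two choices must give deviations that differ by at least $\delta$, so some $i$ forces $\|(f(v_0) - f(o_i)) - (f(o_i) - f(u_0))\| \ge \delta/4$. Set $x_2 = 2(f(v_0) - f(o_i))$ and $x_3 = 2(f(o_i) - f(u_0))$; both lie in the unit ball and satisfy $\|x_2 - x_1\| = \|x_3 - x_1\| \ge \delta/4$.

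The principal obstacle, and the actual work of the proof, is that the iteration is not as clean as for the diamond. In $D_\omega$ every split produces a pair that is a single edge of the next diamond level and therefore automatically acquires two midpoints upon the next subdivision. In $L_\omega$, by contrast, the pair $(u_0,o_i)$ resulting from the first split is a length-two path through $t_1$, and it has a \emph{unique} midpoint in every $L_n$; the two-paths trick cannot be applied to it directly. To circumvent this, I would at each step of the construction arrange that the vector being split is of the form $c(f(p) - f(q))$ where $(p,q)$ is either a single edge of some $L_n$ (which becomes a Laakso gadget in $L_{n+1}$ with two midpoints given by its inner quadrilateral) or the diagonal pair $(t_1,t_2)$ of a gadget (with the two midpoints $o_1,o_2$). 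At the transition where only a 2-path is available, one may either re-split along the two constituent edges of that path, each of which is a genuine edge of $L_n$ and therefore sub-divisible with two midpoints in $L_{n+1}$, or re-anchor the current tree element onto the diagonal pair of the adjacent sub-gadget, using the triangle-inequality identity $\max(\|y - x\|,\|y + x\|) \ge \|y\|$ to choose a sign for which the $\delta$-tree condition is preserved.

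Carrying this out recursively, the nested gadget structure $L_1 \subset L_2 \subset \cdots$ produces an infinite sequence $\{x_k\}$ satisfying the $\delta$-tree relations for some uniform $\delta' > 0$, bounded in a ball of fixed radius in $X$. By the classical characterization cited immediately after Theorem~\ref{T:DiamTree} (Banach spaces with the Radon--Nikod\'ym property contain no bounded $\delta$-tree), it follows that $X$ does not have the Radon--Nikod\'ym property.
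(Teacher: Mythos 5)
You have correctly located the obstruction --- the pair $(u_0,o_i)$ produced by the first split is a $2$-path whose midpoint $t_1$ is unique at every level, so the two-midpoint argument of Theorem~\ref{T:DiamTree} cannot be iterated --- but neither of your proposed workarounds closes the gap. If you re-split the node $x_i=2(f(o_1)-f(u_0))$ along its unique midpoint into $x_{2i}=4(f(o_1)-f(t_1))$ and $x_{2i+1}=4(f(t_1)-f(u_0))$, the averaging identity holds but there is no lower bound at all on $\|x_{2i}-x_i\|=2\|f(o_1)+f(u_0)-2f(t_1)\|$: a bilipschitz $f$ may send $u_0,t_1,o_1$ to collinear, equally spaced points, making this deviation zero, so the $\delta$-tree separation fails at that node. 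The alternative of ``re-anchoring'' the current element onto the diagonal pair of an adjacent sub-gadget destroys the exact identity $x_i=\frac{1}{2}(x_{2i}+x_{2i+1})$, without which the sequence is not a $\delta$-tree at all. The paper explicitly states that it does not know whether bilipschitz embeddability of $L_\omega$ yields a bounded $\delta$-tree --- which is precisely the statement your argument would need in order to invoke the classical ``RNP excludes bounded $\delta$-trees'' theorem at the end.

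The paper's proof avoids the issue by changing the combinatorial object. It introduces a $\delta$-\emph{semitree}, in which each node has \emph{four} children averaging to it, $x_i=\frac{1}{4}(x_{4i-2}+x_{4i-1}+x_{4i}+x_{4i+1})$, one child per edge of a geodesic $u\to t_1\to o_j\to t_2\to v$ through the gadget, and the separation is required only between the two half-sums: $\|(x_{4i-2}+x_{4i-1})-(x_{4i}+x_{4i+1})\|\ge\delta$. Since every child now corresponds to a single edge of some $L_n$, which becomes a full gadget in $L_{n+1}$, the construction iterates cleanly. The price is that the bounded-$\delta$-tree criterion no longer applies, so the paper finishes with a direct martingale argument: the semitree generates a uniformly bounded $4$-adic $X$-valued martingale on $[0,1]$ satisfying $\|f_n-f_{n+1}\|\ge\delta/4$ on a set of measure at least $\frac{1}{2}$ at every step, hence not almost everywhere convergent, and the martingale characterization of the RNP gives the conclusion. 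Both the semitree device and the martingale endgame are absent from your proposal, and without them the argument does not go through.
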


\begin{proof} We do not know whether bilipschitz embeddability of
$L_\omega$ into $X$ implies the existence of a bounded
$\delta$-tree in $X$. To prove Theorem \ref{T:LaakTree} we
introduce the following definition.

\begin{definition}\label{D:semitree}  {\rm Let $\delta>0$.
A sequence $\{x_i\}_{i=1}^\infty$ is called a {\it
$\delta$-semitree} if
$x_i=\frac14(x_{4i-2}+x_{4i-1}+x_{4i}+x_{4i+1})$ and
$||(x_{4i-2}+x_{4i-1})-(x_{4i}+x_{4i+1})||\ge\delta$.}\end{definition}

Our proof has two steps. First we show that bilipschitz
embeddability of $L_\omega$ into $X$ implies that $X$ contains a
bounded $\delta$-semitree. The second step is to show that
existence of a bounded $\delta$-semitree in $X$ implies that $X$
does not have the RNP (this is almost standard, based on
martingales).
\medskip

Let $f:L_\omega\to X$ be a bilipschitz embedding. Without loss of
generality we assume that
\begin{equation}\label{E:delta2}
\delta d_{L_\omega}(x,y)\le||f(x)-f(y)||\le d_{L_\omega}(x,y)
\end{equation}
for some $\delta>0$.
\medskip

We need to construct a $\delta$-semitree in the unit ball of $X$.
The first element of the semitree is $x_1=f(u_0)-f(v_0)$, where
$\{u_0,v_0\}=V(L_0)$.
\medskip

Now we consider the $4$-tuple $u_0,o_1,v_0,o_2$. Observe that
\eqref{E:delta2} together with $d_{L_\omega}(o_1,o_2)\ge 1/2$
implies that $||o_1-o_2||\ge\delta/2$. Consider two pairs of
vectors:
\medskip

\noindent{\bf Pair 1:} $f(v_0)-f(o_1)$, $f(o_1)-f(u_0)$. \qquad
{\bf Pair 2:} $f(v_0)-f(o_2)$, $f(o_2)-f(u_0)$.
\medskip

The inequality $||f(o_1)-f(o_2)||\ge\delta/2$ implies that at
least one of the following is true
\[||(f(v_0)-f(o_1))-(f(o_1)-f(u_0))||\ge\delta/2 \hbox{ or }
||(f(v_0)-f(o_2))-(f(o_2)-f(u_0))||\ge\delta/2.
\]

Suppose that the first inequality holds. We let
\[x_2=4(f(v_0)-f(t_2)), x_3=4(f(t_2)-f(o_1)), x_4=4(f(o_1)-f(t_1)), x_5=
4(f(t_1)-f(u_0)).\] It is easy to check that both conditions of
Definition \ref{D:semitree} are satisfied, we even get
\[||(x_{2}+x_{3})-(x_{4}+x_{5})||=4||(f(v_0)-f(o_1))-(f(o_1)-f(u_0))||\ge2\delta.\] Also,
\eqref{E:delta2} applied to
$d_{L_\omega}(u_0,t_1)=d_{L_\omega}(t_1,o_1)=d_{L_\omega}(o_1,t_2)=d_{L_\omega}(t_2,v_0)=1/4$
implies that $||x_2||,||x_3||,||x_4||,||x_5||\le 1$.
\medskip

We continue our construction of the $\delta$-semitree in the unit
ball of $X$ in a similar manner. For example, to construct $x_6$,
$x_7$, $x_8$, and $x_9$, we consider the $6$-tuple corresponding
to the edge $t_2v_0$ of $L_1$ and repeat the same procedure as
above for $u_0v_0$. Proceeding in an obvious way we get a
$\delta$-semitree in the unit ball of $X$.
\medskip

To show that presence of a bounded $\delta$-semitree implies
absence of the RNP we use the same argument as for $\ep$-bushes in
\cite[p.~111]{BL00}. We construct an $X$-valued martingale
$\{f_n\}_{n=0}^\infty$ on $[0,1]$. We let $f_0=x_1$. The function
$f_2$ is defined on four quarters of $[0,1]$ by $x_2,x_3,x_4,x_5$,
respectively. To define the function $f_3$ we divide $[0,1]$ into
$16$ equal subintervals, and define $f_3$ as $x_6,\dots,x_{21}$,
on the respective subintervals, etc.

It is clear that we get a sequence of uniformly bounded functions.
The first condition in the definition of a $\delta$-semitree
implies that this sequence is a martingale. The second condition
implies that it is not convergent almost everywhere because it
shows that on each interval of the form
$\left[\frac{k}{4^n},\frac{k+1}{4^n}\right]$ the average value of
$||f_n-f_{n+1}||$ over the first half of the interval is
$\ge\delta/4$, this implies that
$||f_n(t)-f_{n+1}(t)||\ge\delta/4$ on a subset in $[0,1]$ of
measure $\ge\frac12$. It remains to apply \cite[Theorem
5.8]{BL00}.
\end{proof}

\begin{small}

\medskip

\noindent{\sc Department of Mathematics and Computer Science\\
St. John's University\\ 8000 Utopia Parkway, Queens, NY 11439,
USA}\\
e-mail: {\tt ostrovsm@stjohns.edu}

\end{small}

\end{large}
\end{document}